\newtheorem{theorem}{Theorem}[section]
\newtheorem{proposition}[theorem]{Proposition}
\newtheorem{corollary}[theorem]{Corollary}
\newtheorem{remark}[theorem]{Remark}
\newtheorem{proof}{Proof}
\title{Nonsmooth Frameworks for an Extended Budyko Model}
\author{Anna M. Barry$^*$, Esther Widiasih, and Richard McGehee}
\begin{document}
\maketitle

\centerline{\scshape Anna M. Barry$^*$}
\medskip
{\footnotesize
 \centerline{Department of Mathematics}
   \centerline{The University of Auckland}
   \centerline{Private Bag 92019}
   \centerline{ Auckland 1142, New Zealand}
} 

\medskip

\centerline{\scshape Esther Widiasih}
\medskip
{\footnotesize
 \centerline{Mathematics and Science Subdivision}
 \centerline{University of Hawaii, West Oahu}
\centerline{91-1001 Farrington Highway}
\centerline{Library Room 203}
\centerline{Kapolei, HI 96707 }
}

\medskip

\centerline{\scshape Richard McGehee}
\medskip
{\footnotesize
 \centerline{Department of Mathematics}
 \centerline{University of Minnesota}
\centerline{206 Church St SE}
\centerline{Minneapolis, MN 55455, USA}
}

\bigskip

\begin{abstract} 

In latitude-dependent energy balance models, ice-free and ice-covered conditions form physical boundaries of the system.  With carbon dioxide treated as a bifurcation parameter, the resulting bifurcation diagram is nonsmooth with curves of equilibria and boundaries forming corners at points of intersection.  Over long time scales, atmospheric carbon dioxide varies dynamically and the nonsmooth diagram becomes a set of quasi-equilibria.  {However, when introducing carbon dynamics, care must be taken with the physical boundaries and appropriate boundary motion specified.  In this article, we extend an energy balance model to include slowly varying carbon dioxide and develop a nonsmooth framework based on physically relevant boundary dynamics.  Within this framework, we prove existence and uniqueness of solutions, as well as invariance of the region of phase space bounded by ice-free and ice-covered states.} 
\end{abstract}

\section{Introduction}

There is significant evidence that large glaciation events took place during the Proterozoic era (2500-540 million years ago).  In particular, this evidence points to the existence of glacial formations at low latitudes, see the review articles \cite{hoffmanschrag2002terranova, pierrehumbert2011climate} and the references therein. One theory on the exodus from such an extreme climate was put forward by Joseph Kirschvink \cite{kirschvink1992protero}, who advocated that there was accumulation of greenhouse gases in the atmosphere, e.g. CO$_2$.  His theory purports that during a large glaciation chemical weathering processes would be shut down, thus eliminating a CO$_2$ sink.  Moreover, volcanic activity would continue during the glaciated state.  The combination of these effects would \textit{slowly} lead to enough build-up of atmospheric carbon dioxide to warm the planet and start the melting of the glaciers. Once a melt began, a deglaciation would follow \textit{rapidly} due to ice-albedo feedback.

There has been a wealth of modelling work on ``snowball'' events ranging from computationally intensive global circulation models (GCMs) to low dimensional conceptual climate models (CCMs).  In 1969, Mikhail Budyko and William Sellers independently proposed energy balance models (EBMs); these were CCMs capturing the evolution of the temperature profile of an idealized Earth \cite{budyko2010effect, sellersglobal}. Many others, for example \cite{ caldeira1992susceptibility, abbot2011, north1975theory}, have followed in the footsteps of Budyko and Sellers and used similar conceptual models capable of exhibiting snowball events. The low dimensionality of CCMs allows for a dynamical systems analysis, and hence a deeper investigation into some of the key feedbacks such as greenhouse gas and the ice-albedo effect. 

From the point of view of dynamical systems, many of these early works share a similar theme by focusing on a bifurcation analysis with respect to the \textit{radiative forcing} parameter, one that depends on changes in atmospheric CO$_2$ and other greenhouse gas levels.  The reader may find figures similar to those displayed in Figure \ref{icf} in earlier works \cite{abbot2011, hoffmanschrag2002terranova, pollard2005snowball}.  These figures illustrate the glaciation state of an Earth with symmetric ice caps in terms of atmospheric CO$_2$ i.e. ice line latitude at $90^o$ means Earth is ice free while $0^o$ means Earth is fully glaciated. In both figures, the effect of the radiative forcing due to CO$_2$ and other greenhouse gases is treated statically as a parameter in a simple bifurcation analysis.  Mathematically, the bifurcation analysis already extends beyond the realm of smooth dynamical systems. On one hand, the bifurcation diagrams are obtained conventionally, with the bold curves indicating the stable branches and the dotted curves showing the unstable ones.   On the other hand, the extreme states (ice-free and ice-covered) are not true equilibria of the system, but treated as such in the literature as evidenced by the labeling of ``ice-free branch'' and ``ice-covered branch''. Indeed, the extreme states of the ice line are special because they serve as physical boundaries for the dynamics: the ice line cannot extend beyond the pole nor, because of the north-south symmetry assumed in the models, can it move downward of the equator. Therefore, mathematical models of Snowball Earth must reflect this physical imposition and be treated with a nonsmooth systems perspective.

Since glacial extent varies over time and dynamic processes such as chemical weathering affect the level of atmospheric CO$_2$, the bifurcation diagrams in Figure \ref{icf}  can be viewed as phase planes with dynamic variables consisting of the glacier extent (ice line) and  radiative forcing due to greenhouse gases. In particular, if a global glaciation did occur, and Kirschvink's argument about the accumulation of atmospheric  CO$_2$ held, then we should expect orbits of this dynamical system to traverse the extreme ice latitudes, i.e. the equator and the pole.  Following Kirschvink's idea further, we treat greenhouse gas effects on the energy balance by incorporating a slow CO$_2$ variable and treat the ice latitude as a relatively fast variable. The goal of the present article is to analyze such an interplay using a nonsmooth systems framework, thus providing mathematical support for Kirschvink's hypothesis.  In this work, we treat the bifurcation diagram (similar to those in Figure \ref{icf}) as a set of quasi-steady states of a slow-fast system and {specify boundary motion based on Kirschvink's hypothesis and the long-term carbon cycle.}  

 \begin{figure}[ht]
  \centering
   \subfloat[Figure 10 from \cite{abbot2011} ]  {\includegraphics[width=0.45\textwidth]{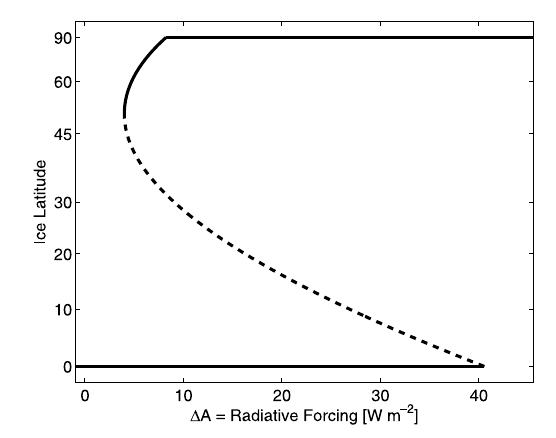} {\label{fig11}} }  \quad \quad
   \subfloat [Figure 6 from \cite{hoffmanschrag2002terranova}]  {\includegraphics[width=0.45\textwidth]{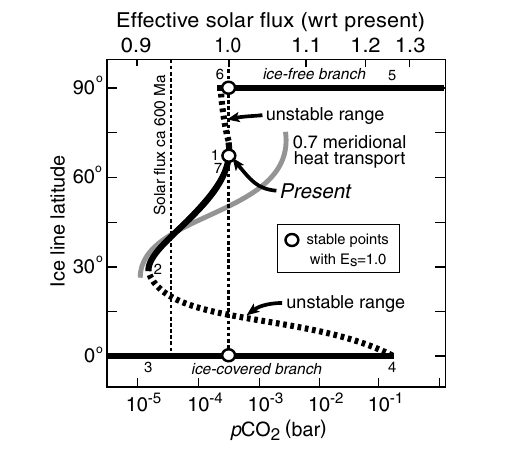} {\label{fig12}}} \quad
  \caption{Bifurcation diagrams from energy balance models illustrating hysteresis in the climate system.  In each figure, solid lines correspond to stable steady states while dashed lines correspond to unstable steady states.  The positive horizontal axis can be thought of as increasing atmospheric carbon dioxide, and the vertical axis is the latitude of the ice line.  Stability of snowball and ice-free states is \textit{inferred}; these are physical boundaries and not true equilibria of the equations.}  
\label{icf}
 \end{figure}

We present the topics as follows.  In the next section, we motivate the model of interest.  In Section \ref{sect:non}, we propose an extension of the model that is amenable to a nonsmooth dynamical systems treatment.  In particular, we show that a nonsmooth ice line model based on a latitude-dependent EBM coupled with a simple equation for greenhouse gas evolution has unique forward-time solutions.  We also show that a similar result can be obtained by embedding the system in the plane and utilizing Filippov's theory for differential equations with discontinuous right-hand sides in Section \ref{sect:alt}. We end Section \ref{sect:non} with an analysis of the system dynamics and conclude the paper with a discussion in Section \ref{sect:discussion}.

\section{The Equations of Motion}
\subsection{The Budyko Energy Balance Model and the Ice Line Equation}
The Budyko EBM describes the evolution of the annual temperature profile $T=T(y,t)$, where $t$ denotes time and $y$ denotes the sine of the latitude. The governing equation may be written:

\begin{equation} \label{bud}
R \frac{\partial }{\partial t}T(y,t) =  Qs(y)(1-\alpha(\eta,y))-(A+BT(y,t))+C \left( \int_0^1 T(y,t)dy-T(y,t) \right).
\end{equation}

The main idea is that the change in temperature is proportional to the imbalance in the energy received by the planet.  The amount of short wave radiation entering the atmosphere is given by $Qs(y)(1-\alpha(\eta,y))$, where Q is the total solar radiation (treated as constant), $s(y)$ is the distribution of the solar radiation, and $\alpha(\eta,y)$ is the albedo at latitude $y$ given that the ice line is at latitude $\eta$. The outgoing longwave radiation is the term $A+BT(y,t)$; it turns out that the highly complex nature of greenhouse gas effects on the Earth's atmosphere can be better approximated by a linear function of surface temperature than by the Stephan-Boltzmann law for blackbody radiation ($\sigma T^4$), see the discussion in \cite{graves1993new}. The parameter $A$ is particularly interesting here, because it is related to greenhouse gas effects on the climate system, which we will describe further in Section \ref{sec-ghg}.  The transport term $C ( \int_0^1 T(y,t)dy - T )$ redistributes heat by a relaxation process to the global average temperature. {Note that Budyko's equation assumes symmetry of the hemispheres and so one typically works only with the northern hemisphere, i.e. $y\in [0,1]$.}  A more detailed discussion of this model can be found in \cite{tung2007topics}. Table \ref{tab:Fun} lists the polynomial expressions of some key functions in this model using standard parameter values.  

The evolution of the ice-water boundary, or what is often called the \textit{ice line}, $\eta$, provides a positive feedback to the system through the albedo function $\alpha(\eta,y)$.  In \cite{tung2007topics} and \cite{north1975theory}, a critical ice line annual average temperature, $T_c$, is specified.  Above this temperature ice melts, causing the ice line to retreat toward the pole, and below it ice forms, allowing glaciers to advance toward the equator. One way to model this is described in \cite{widiasih2013dynamics}, where the augmented ice line equation governing $\eta$ is written as

\begin{equation} \label{etadot}
\frac{d \eta}{dt} = \rho ( T(\eta)-T_c ).
\end{equation}

{In \cite{mcgwid2014simplification}, McGehee and Widiasih imposed equatorial symmetry on the set of solutions to \eqref{bud}-\eqref{etadot} by expanding the temperature profile $T(y,t)$ in even Legendre polynomials in the spatial variable $y$.  They found that the space of even Legendre polynomials contains all equilibrium solutions of the model and is invariant for the system when using the quadratic approximation to the insolation distribution $s(y)$ that was proposed by North \cite{north1975theory}. By using only the first two terms in the expansion, the authors reduced the infinite-dimensional system \eqref{bud}-\eqref{etadot} to one consisting of five ordinary differential equations, four representing the temperature profile and one for the ice line.} {They also found that the five-dimensional temperature profile-ice line system could be further reduced to a single equation, due to the existence of a one-dimensional invariant manifold parametrized by the ice line. The equation they obtained is}

\begin{equation} \label{etadot-reduced}
\frac{d\eta}{dt}= h_0(\eta; A)
\end{equation}
\noindent with
\begin{equation}\label{h} 
h_0(\eta;A):= \rho\left(\frac{Q}{B+C} \left(s(\eta)(1-\alpha(\eta,\eta))+\frac{C}{B}(1-\overline{\alpha}(\eta))\right)-\frac{A}{B}-T_c \right)
\end{equation} 
\noindent where $\overline{\alpha}(\eta)=\int_0^1 s(y) \alpha(\eta,y)dy$. The parameter $A$ appearing in \eqref{etadot-reduced} is indeed the greenhouse gas parameter in \cite{mcgwid2014simplification}, and is playing a similar role as that in the bifurcation diagram shown in Figure \ref{fig11}.

{In the same work, they examined the consistency of the coupled temperature profile-ice line system \eqref{bud}-\eqref{etadot} with physical interpretation. The orbital (i.e. Milankovitch) forcings affect the amount of sunlight that the planet receives, hence, its temperature. However, the planet's response to these forcings is delayed. The Budyko equation has been shown to be a good approximation to Earth's temperature with the assumption of a 2500 year delay between the orbital forcings and the climate response \cite{mcglehman2012}. Building on this assumption, the authors in \cite{mcgwid2014simplification} show that the parameter $\rho$ must be small, and approximate it to be about 0.001.}

In what follows, we {couple an equation for atmospheric greenhouse gas evolution through the (former) parameter $A$.  We couple this equation to the reduced model of McGehee and Widiasih \cite{mcgwid2014simplification} and use this as a proof of concept for the nonsmooth questions of interest. Numerical simulations produce similar dynamics in higher-dimensional versions of the model, but making a rigorous reduction  of the infinite-dimensional system to the nonsmooth system studied in this article would require new center manifold results for discontinuous vector fields.  We leave this for future work.} From here on, we highlight the explicit dependence of the equation for $\eta$ on $A$ by writing 
\begin{equation*}
\frac{d\eta}{dt}=h(A,\eta)
\end{equation*}
where $h(A,\eta)=h_0(\eta;A)$ and $h(A,\eta)$ is thought of as a real valued function over the plane $\mathbb{R} \times \mathbb{R}$.

\begin{table}[htb]
\centering
\begin{tabular}{|ccc|}  
\hline
\textbf{Parameters} & Value & Units  \\ \hline \hline
&& \\
$Q$  &  321  &  $\text{W}\text{m}^{-2}$ \\
$s_1$ & 1 & dimensionless  \\
$s_2$ &  -0.482  &  dimensionless  \\
$B$ & 1.5  &  $\text{W}\text{m}^{-2}\text{K}^{-1}$    \\
$C$ & 2.5B & $\text{W}\text{m}^{-2}\text{K}^{-1}$  \\
$\alpha_1$ & 0.32 & dimensionless   \\
$\alpha_2$ & 0.62 & dimensionless \\
$T_c$ & $-10$ & ${}^\circ\text{C}$   \\
&& \\
\hline
\end{tabular}  
\caption{Parameter values as in \cite{abbot2011}}   \label{tab:ParValues}
\begin{tabular}{|c|}
\hline
  \textbf{Functions} \\ \hline \hline
   \\
$s(y) = 1 - \frac{0.482}{2} (3 y^2 - 1)$\\  
$h(A,\eta)=\rho\left(112.88+56.91\eta-24.31\eta^2-11.05\eta^3-\frac{A}{1.5}\right)$   \\
 $g(A,\eta)=\delta(\eta-\eta_c)$  \\
$\alpha(\eta,y)=\begin{cases} 
&\alpha_1 \text { when } y< \eta \\
& \frac{\alpha_1+\alpha_2}{2} \text{ when } y=\eta\\
&\alpha_2 \text{ when } y>\eta 
  \end{cases}$ \\
 \\
\hline
\end{tabular}  
\caption{Functions as in \cite{mcgwid2014simplification}.}   \label{tab:Fun}
\end{table}

\subsection{Incorporating Greenhouse Gases} \label{sec-ghg} 

We now make an argument for a very simple form of an equation for greenhouse gas evolution. In the Budyko and Sellers models \cite{  budyko2010effect,sellersglobal}, the parameter $A$ plays the important role of reradiation constant.  The Earth absorbs shortwave radiation from the sun, and some of this is reradiated in the form of longwave radiation.  The current value of $A$ is measured using satellite data to be approximately $202$ $W/m^2$ \cite{ graves1993new, tung2007topics}. 

However, throughout the span of millions of years the longwave radiation parameter $A$ is not constant, as the amount of heat reradiated to space depends crucially on greenhouse gases, especially carbon dioxide. {This relationship is highly complex, but there have been some attempts at creating simplified models.  One example comes from \cite{caldeira1992susceptibility}, where $A$ was expressed as a polynomial expansion in the logarithm of atmospheric carbon dioxide for CO$_2$ near modern values measured in parts per million. Such an expansion is unlikely to directly apply over the time period associated with entrance into and exit from Snowball Earth, where carbon dioxide would have varied over many orders of magnitude. Following \cite{abbot2011} (see also Figure \ref{fig11}), we choose to vary $A$ as a proxy for CO$_2$ rather than attempting to build a model for their direct relationship.} Intuitively, adding carbon dioxide to the atmosphere decreases its emissivity, allowing less energy to escape into space. Therefore, outgoing longwave radiation should vary inversely with CO$_2$.

Since the land masses were concentrated in middle and low latitudes prior to the global glaciation period, Kirschvink postulated that the stage was set for an ice-covered Earth.  Then \lq On a snowball Earth, volcanoes would continue to pump CO$_2$ into the atmosphere (and ocean), but the sinks for CO2 – silicate weathering and photosynthesis would be largely eliminated\rq   \cite{ hoffmanschrag2002terranova, kirschvink1992protero}. In  more recent work,  Hogg \cite{hogg} put forth an elementary model  for the evolution of greenhouse gases consistent with Kirschvink's theory.  In short, he argued that the main sources for atmospheric CO$_2$ were due to an averaged volcanism rate and ocean outgassing.  The main carbon dioxide sink was said to be due to the weathering of silicate rocks. For our purposes, it will be enough to work with volcanism and weathering.  Let $V$ denote the rate of volcanism and $W$ the weathering rate, where the latter is assumed to depend on the location of the iceline, or more specifically the amount of available land or rock to be weathered.  Putting this together with the assumption that the reradiation variable $A$ varies inversely with CO$_2$ gives

\begin{align} 
\frac{dA}{dt}&=-(V-W\eta)\nonumber\\
&:=\delta(\eta-\eta_c). \label{Adoteq}
\end{align} 

where $\delta>0$ and $\eta_c=\frac{V}{W}$, the ratio of volcanism to weathering. Let $g(A,\eta):=\delta (\eta -\eta_c)$. 

Next, allowing $A$ to vary in the ice line equation \eqref{h}, we now have a system of equations for $A$ and $\eta$:
\begin{equation} \label{Aeta}
\begin{cases}
&\dot{A}=g(A,\eta)\\
&\dot{\eta}=h(A, \eta).\\
\end{cases}
\end{equation}

To understand the timescale of $A$, we refer to the elucidation of the snowball scenario by Hoffman and Schrag in which they argue that weathering took place at a much slower rate than the ice-albedo feedback (see Fig. 7 on page 137 \cite{hoffmanschrag2002terranova} ). 

The idea of packaging the essence of the long term carbon cycle into one simple equation is not novel, and is consistent with earlier findings \cite{  edmond1996fluvial, hogg, kump2000chemical}.  The novelty comes from connecting such an equation to an energy balance model and analyzing the dynamics of the coupled system.  Indeed, the parameter $A_{ex}$ on page 644 \cite{kump2000chemical} is the variable $\eta$ in equation \eqref{Aeta} and it represents \textit{the effective area of exposure of fresh minerals}. The parameter $\eta_c$ can therefore be thought of as a \textit{critical} area. The coupling of the ice line $\eta$ and the greenhouse gas variable $A$ follows naturally.  

In its current form, the model does not restrict dynamics to the physical region $0\leq \eta\leq 1$.  There are orbits that exit this interval on either end, and we must therefore create reasonable assumptions for projection of this motion onto the boundaries.  Furthermore, $\eta$ should evolve along the physical boundaries, as suggested by  the bifurcation diagrams in Figure \ref{icf}. {In the next section, we develop a nonsmooth version of the model that respects these boundaries and prove existence and uniqueness of solutions. We use this as a proof of concept for how to treat physical boundaries beyond which a vector field is undefined.} 

\section{ A {Nonsmooth} System for a Glaciated Planet\label{sect:non}}
In this section, we introduce suitable constraints on the ice dynamics so that trajectories with initial conditions in the physical region remain in this region for all time. {Moreover, we obtain physically reasonable motion on the boundaries.}

As mentioned previously, the pole and the equator define physical constraints of the ice line;  it must stay in the unit interval $[0,1]$. However, the model in its current form does not take this into account.  A natural choice is to set $\dot{\eta}=0$ when $\eta$ reaches a physical boundary.  In addition to this, one needs to account for the stability/instability of these states, thus allowing orbits to exit the boundary when it becomes unstable.  More specifically, \added{it would be natural to define the evolution of $(A,\eta)$ via}

\begin{equation}\label{AetaPW}
\begin{cases}\dot{A}&=g(A,\eta)\\
\dot{\eta}&= f(A,\eta)
\end{cases}
\end{equation}

where 

\begin{equation}\label{etaPW}
f(A,\eta)=\begin{cases}
 0 & \{ \eta=0 \text{ and } h(A,\eta)<0 \}\text{ \text{or} }\{ \eta=1 \text{ and } h(A, \eta)>0\}\\
   h(A, \eta) & \text{otherwise}\end{cases}
   \end{equation}
   
\added{and $g$, $h$ are as in Table \ref{tab:Fun}.}  
The second equation forces $\eta$ to stop at the physical boundary exactly when it is about to cross that boundary.  When this happens, $\eta$ should remain constant while $A$ continues to evolve.  As $A$ evolves, so does the value of $h$ and \added{hence $\eta$ should be allowed to re-enter the interval $(0,1)$} as soon as $h$ becomes positive on the lower boundary, $\eta=0$, or negative on the upper boundary, $\eta=1$. This is analogy with Kirschvink's hypothesis, as we expect orbits that enter the snowball state to recover when enough carbon dioxide has built up in the atmosphere. 

\added{However, the right-hand side of \eqref{AetaPW} is discontinuous at the boundaries and so solutions will not necessarily be differentiable there.  Thus, we \textit{define} a solution to an initial value problem for \eqref{AetaPW} with initial condition $(A(0),\eta(0))=(A_0,\eta_0)$ to be an absolutely continuous function $X(t)=(A(t),\eta(t))$ that satisfies the integral equation}

\begin{equation}\label{defn:sol}
X(t)=X_0+\int_0^t \left(g(A(s),\eta(s)), f(A(s),\eta(s))\right) ds
\end{equation}

with $f$ and $g$ as above.  \added{Note that since $h$ is smooth, the integral equation is equivalent to the system $(\dot{A},\dot{\eta})=(g,h)$ inside the strip $0<\eta<1$.  }

 \begin{remark}\label{rmk:projrule} {The discontinuous vector field arising from \eqref{AetaPW} defines a \textup{Projection Rule} for orbits that reach the boundary:  when the vector field points \textit{out} of the strip $\mathbb{R}\times [0,1]$, the forward evolution is defined by projection of the vector field onto the boundary, and when the vector field points \textit{into} the strip, the system is left to evolve without interruption.}
\end{remark}

{ Because there is no physically reasonable way to define the vector field outside of the interval $0\leq \eta\leq 1$, a system with such a rule does not fit readily into an existing analytical framework. Indeed, common frameworks for nonsmooth systems assume that the vector field is defined in a neighborhood of the curve along which it is discontinuous. Thus it is not clear that classical results from dynamical systems apply in this setting.  Fundamentally, the existence and uniqueness of solutions needs to be verified.  In what follows, we prove that the system with the rule specified above has unique forward-time solutions and that the physical region is forward invariant. We also outline an alternative approach that involves defining a nonphysical ``confinement'' vector field outside the physical region and serves to handle and correct the behavior of numerically generated solutions that may overshoot the boundary.  This second approach also guarantees existence, uniqueness, and forward invariance. Further, its solutions trace out the same curves in phase space as the previous approach, though they differ in their smoothness.}

\subsection{First framework: the projection rule\label{sect:ex-un}}

{In this section, we show that the projection rule results in existence and uniqueness of solutions.}

\begin{theorem} \label{thm:projection}
{\added{Let $0<\eta_c<1$ and $t>0$.} Then an initial value problem for \eqref{AetaPW} with initial condition $(A_0,\eta_0)$ and $0\leq  \eta_0\leq 1$ has a unique forward-time solution \added{of the form \eqref{defn:sol}}. \added{The time derivative of the solution has at most a finite number of discontinuities and satisfies \eqref{AetaPW} in between them.}  Furthermore, the strip $\mathbb{R} \times [0,1]$ is forward invariant. }
\end{theorem}

\begin{proof}
{ \added{First, a solution is created in an ad hoc manner by concatenating pieces of solutions to the associated smooth systems on each subregion of the phase space. Since  $f$ and $g$ are smooth on the open set $\{0< \eta<1\}$, we focus only on cases where entrance into or exit from $\eta=0,1$ occurs.} Without loss of generality, consider an initial condition $(A_0,\eta_0)$ with $0<\eta_0<1$ such that the smooth solution $(A(t),\eta(t))$ reaches the line $\eta=1$ at some time $t_1< t$, i.e. $\lim_{t\rightarrow t_1^-}\eta(t)=1$. }
  
  \added{ From Table \ref{tab:Fun}, we see that $h(A,\eta)=\frac{\rho}{1.5}(f(\eta)-A)$ where $f$ is a cubic function of $\eta$.  Thus, as can be seen in Figure \ref{critman}, the set $\{h=0\}$ divides the strip into two regions; $h>0$ on the left-most region and $h<0$ on the right-most region.} Define $A_1=\lim_{t\rightarrow t_1^-}A(t)$ so that $h(A_1,1)\geq 0$.  If $h(A_1,1)>0$, then $h$ being Lipschitz continuous implies that it must be positive in a neighborhood of $(A_1,1)$. Therefore, we extend the solution according to \eqref{etaPW} and \eqref{defn:sol} via
  
   \begin{align}
  A(s)&=A_1+\delta(1-\eta_c)(s-t_1)\label{defn:bdry1}\\
 \eta(s)&=1\label{defn:bdry2}
  \end{align}
    
\added{for $t_1\leq s\leq t_2$ where $A_2=\lim_{t\rightarrow t_2^-} A(t)$ is such that $h(A_2,1)=0$, i.e. $t_2=t_1+\frac{f(1)-A_1}{\delta(1-\eta_c)}$. }
  
 Since $0<\eta_c<1$, note that $\delta(1-\eta_c)>0$ and hence there is a single orbit satisfying the smooth system of differential equations

\begin{equation}\label{eq:smooth}
\begin{cases}
\dot{A}&=g(A,\eta)\\
\dot{\eta}&=h(A,\eta)
\end{cases}
\end{equation}

\noindent {that passes the curve $h=0$ from left to right and touches the boundary at the single point $(A_2,1)$ forming a tangency.  Moreover, this orbit is strictly contained in the interval $0<\eta<1$ on either side of the tangency \added{and also satisfies the integral equation \eqref{defn:sol} where $f$ is replaced by $h$.} Thus, taking an initial condition $(A(t_2),\eta(t_2))=(A_2,1)$ and concatenating the forward evolution of this trajectory with the previous piece of the solution \added{to the nonsmooth system} provides a further extension via}

\begin{equation*}
\begin{cases}
A(s)&=A_2+\int_{t_2}^s g(A(\tau),\eta(\tau))\, d\tau\\
\eta(s)&=1+\int_{t_2}^s h(A(\tau),\eta(\tau))\, d\tau,
\end{cases}
\end{equation*}

{\noindent \added{for all} $t_2\leq s\leq t$ such that $0<\eta(s)<1$.  \added{Suppose $t_3$ is the next time the created solution reaches the boundary. In the case that the orbit again reaches the ice-free state $\eta=1$, we extend the solution exactly as above and this definition leads to another exit from this boundary at the point $(A_2,1)$ as before. In this case we have created a closed orbit. Analogous extensions can be made when the ice-covered boundary $\eta=0$ is reached. Any solution created by this method of concatenation that reaches the same boundary twice lies on a closed orbit. This in turn implies that there can only be a finite number of switches, i.e. entrances into and exits from the boundaries.} The created solution is absolutely continuous and differentiable everywhere except at the points where it enters one of the lines $\eta=0,1$. \added{By definition, it satisfies \eqref{AetaPW} everywhere except at these points.}}

{Now suppose there is another absolutely continuous solution $(\bar{A}(t),\bar{\eta}(t))$ of \eqref{AetaPW} with $(\bar{A}(0),\bar{\eta}(0))=(A_0,\eta_0)$. \added{Due to smoothness of $h$ and $g$}, both trajectories must reach the boundary at the same time and hence they agree for all $0\leq s<t_1$.  If they reach the boundary at the tangency point, then \added{the integrands of \eqref{defn:sol} continue to be $h$ and $g$ and hence classical uniqueness arguments can be used until such time that the solutions reach another boundary.}  If they reach the boundary at $(A_1,1)$ such that $h(A_1,1)>0$, then \added{both satisfy \eqref{defn:bdry1}-\eqref{defn:bdry2}} and uniqueness persists until $t_2$ such that $h(A(t_2),1)=0$. 

Indeed, since $g$ and $h$ are Lipschitz continuous, classical arguments invoking Gr{\"o}nwall's Inequality can be used to complete the uniqueness proof.  In particular, for $t_2\leq s\leq t$ such that $0<\eta(s)\leq 1$ one finds that}
\begin{equation*}
\lVert (A(t),\eta(t))-(\bar{A}(t),\bar{\eta}(t))\rVert^2\leq (K_1^2+K_2^2)\left(\int_{t_2}^t \lVert(A(s),\eta(s))-(\bar{A}(s),\bar{\eta}(s))\rVert ds\right)^2
\end{equation*}
{where $K_1$ and $K_2$ are the Lipschitz constants of $g$ and $h$.  \added{As discussed above, the solution may reach one or both boundaries a finite number of additional times, but the same uniqueness arguments apply for each segment. In particular, the only requirement is that the solutions agree at the beginning of each segment, and this is immediate the above arguments.} Thus $(A(t),\eta(t))=(\bar{A}(t),\bar{\eta}(t))$. Forward invariance is also immediate from the rules specified in \eqref{etaPW}.}

\end{proof}

{Note that Theorem \ref{thm:projection} only guarantees forward-uniqueness of solutions of \eqref{AetaPW}, and in general solutions are not unique in backward time; any solution that passes through a point on an attracting portion of the boundary (i.e. where $h(A,\eta)>0$ on $\eta=1$ or where $h(A,\eta)<0$ on $\eta=0$) could have have reached that point directly from the interval $0<\eta<1$ or from further back along the boundary itself.}

\begin{remark}The solutions guaranteed by Theorem \ref{thm:projection} are of the same type as those guaranteed by Carath\'eodory's Existence Theorem \cite{caratheodory2013vorlesungen} (see also \cite{filippov1988}). However, the existence theorem does not apply here because it requires that the right-hand sides of the differential equations be continuous in $(A,\eta)$.  In the above theorem, we have shown that the projection rule specified in \eqref{AetaPW} and Remark \ref{rmk:projrule} has enough continuity to guarantee existence and uniqueness of solutions. { More specifically, note that this rule guarantees upper semi-continuity of the vector field, which is also a key requirement for existence and uniqueness of Filippov's notion of solution \cite{filippov1988}, and which we explore in the next section. We will see that the solution given by the theorem above agrees with Filippov's solution, but the latter may have multi-valued derivative. }\end{remark}

\subsection{Alternative approach: Filippov framework \label{sect:alt}}
{In this section, we introduce a Filippov systems perspective by way of defining a \emph{confinement vector field} that points into the unit interval $0<\eta<1$ from outside. This perspective is useful for numerical simulation because it handles the case when orbits overshoot the physical boundary.   We wish to choose a confinement field that preserves the speed of motion along the boundary guaranteed by the projection rule \eqref{AetaPW} and obtain solutions which have the same forward evolution.   We will see that Filippov's convention \cite{filippov1988} preserves this speed if we work with the following piecewise-Lipschitz vector field on $\mathbb{R}\times\mathbb{R}$:}

\begin{align}\label{ig-pws}
&\dot{A} = g(A,\eta)\\
&\dot{\eta} = \begin{cases} 
-|h| \quad & \eta > 1\\
h \quad & 0< \eta < 1\\\label{ig-pws1}
|h| \quad &\eta < 0. \\
\end{cases}\\\nonumber
\end{align}
having an initial value $(A(0),\eta(0)) \in \mathbb{R} \times \mathbb{R}$.

{In the above scenario, the boundaries $\eta=0,1$ become curves along which the vector field is discontinuous (so-called \textit{switching manifolds}) and reasonable motion along these curves must be specified.  Because we have chosen to work with Filippov's convention, we take all convex combinations of the limiting vectors from either side, i.e. linear combinations of the Lipschitz vector fields. A more general convention allowing for nonlinear combinations was discussed in \cite{jeffrey2014hidden}, and taking such a perspective may require a different choice of external vector field to preserve the speed of boundary motion. We do not discuss the nonlinear case further here.  Instead, we consider the differential inclusion}

\begin{align}\label{diffinc}
\dot{\eta}\in \begin{cases}
 \{(1-q)h+q(-|h|),\; q\in[0.1]\} \quad & \eta=1\\
 \{(1-q)h+q|h|,	\; q\in[0,1]\} \quad & \eta=0
 \end{cases}
\end{align}
and let $f(A,\eta)$ be the vector field defined by right-hand sides of \eqref{ig-pws}-\eqref{diffinc}. A \textit{Filippov solution} to \eqref{ig-pws}-\eqref{diffinc} will be defined as an absolutely continuous function $X(t)=(A(t),\eta(t))$ such that $\dot{X}\in f(X)$ for almost all time $t$.

{We observe that the vector field defined by \eqref{ig-pws}-\eqref{ig-pws1} points ``into'' the interval $0<\eta<1$ except along the curve $h(A,\eta)=0$ where it is parallel to $\eta=0, 1$. Thus, according to Filippov's theory, there can only be \textit{sliding} or \textit{crossing} behavior along the boundary away from the two transverse intersections with $h=0$. All crossing orbits must enter the interval $0<\eta<1$. As we will see, the points where $h=0$ signal the transition from (attracting) sliding to crossing.
We now show the existence and uniqueness of Filippov solutions and the forward invariance of the strip. }

\begin{proposition} \label{filippov-bud} 
{There exists a unique Filippov solution to \eqref{ig-pws}-\eqref{diffinc}. Furthermore,  the strip $\mathbb R \times [0,1]$ is forward-time invariant. }
\end{proposition}

\begin{proof}

{This is a direct application of Filippov's existence and uniqueness result, Theorem 14, p. 228, in \cite{filippov1964}. Using similar notation, let $f(z)$ denote the vector-valued function $[g(z),h(z)]$, the normal direction $N$ is the vector $[0,1]$, and anything with a subscript $N$ denotes the projection onto $N$. By design, the confinement vector fields above and below the strip $\mathbb{R} \times [0,1]$ point into the strip. We consider a small neighborhood $G$ around a point $z_0$ away from the nullcline $h=0$ on the line $\eta=1$.  Letting $f^+$ and $f^-$ denote the vector field above and below $\eta=1$, we see that $f^+ - f^-$ is continuously differentiable, and the condition that either $f_N^+ < 0$ or $ f_N^- >0$ is satisfied for all points in $G$. Then Filippov's theorem asserts the existence and uniqueness of a forward-time solution to system \eqref{ig-pws}-\eqref{diffinc} as well as continuous dependence on initial conditions. Lastly, Remark 1, p. 229 \cite{filippov1964} asserts that the result holds at the intersection of $\eta=1$ with the $h$-nullcline, since at that point, $f^+ = 0 = f^-$.  A similar argument applies for initial conditions near $\eta=0$. }

{Lastly, since any solution can only cross into or slide along the boundary, the strip is forward invariant.}

\end{proof}

\begin{remark}\label{remark}
There are many possible ways of extending system \eqref{Aeta} beyond the strip $\mathbb{R} \times [0,1]$. The choice of extension as done in system \eqref{ig-pws}-\eqref{diffinc} assures the following points: 
\begin{enumerate}
\item The strip $\mathbb{R} \times [0,1]$ is attracting.
\item The switching from sliding to crossing is entirely determined by zeroes of $h(A, \eta)$. This point is especially important in the modeling of the physical system, since $h$ signals the ice line's advance or retreat and $h$ should  be ``off" for some positive amount of time at the extreme ice line locations (pole or equator) and should ``turn back on" when the system has sufficiently reduced or increased greenhouse gases.
\item {Filippov's theory asserts that at the attracting sliding segment on $\eta=1$, the time derivative must be $$ \frac{d\eta}{dt} = \frac{-|h|+ h}{2} $$ almost everywhere, while at $\eta=0$, it is $$\frac{d\eta}{dt} = \frac{|h|+ h}{2}$$ almost everywhere. Hence, the effective speed at which orbits slide along the boundaries is zero and the motion is thus entirely determined by the governing function of the greenhouse gas effect,  $g(A,\eta)$. Again, this aspect is especially important in the modeling of the physical system, since at the extreme ice line location (pole or equator), the ice-albedo feedback shuts off and the greenhouse gas effect is the only driver of the system. }
\end{enumerate}
\end{remark}

\subsection{Dynamics of the model \label{sect:dynamics}}
 {We now explore the dynamics of the system \eqref{AetaPW}.  We show that its boundary dynamics guarantee exit from the snowball scenario, and use the Filippov {perspective} to {run simulations.}  We find that the system possesses a stable small ice cap regime as well as large amplitude periodic orbits that oscillate between ice-covered and ice-free states.} 

The $\eta$-nullcline, given by $h(A,\eta)=0$, is cubic in $\eta$ and linear in $A$ (see Table \ref{tab:Fun}).  It has a single fold in the region $0 \leq \eta \leq 1$ at $\eta =\eta_f \approx 0.77$, as can be seen in Figure \ref{critman}. The $A$-nullcline is the horizontal line $\eta=\eta_c$. If $0<\eta_c<1$, the system has a single fixed point. The eigenvalues associated with this fixed point are

\begin{equation*}
\lambda_{\pm}=\frac{1}{2}\left(\frac{\partial h}{\partial \eta}\pm \sqrt{\left(\frac{\partial h}{\partial \eta}\right)^2-\frac{4\delta}{B}}\right).
\end{equation*}

 From this and Figure \ref{critman}, we see that if the fixed point lies above the fold and in the physical region, i.e. $\eta_f<\eta_c<1$, then it is stable, and it is unstable when $0<\eta_c<\eta_f$.  This is reminiscent of the ``slope-stability theorem'' from the climate literature \cite{cahalan1979stability}. In that work, the authors related changes in slopes of equilibrium curves to changes in stability of equilibria for a globally averaged energy balance model.  They found that a small ice cap or ice-free solution could be stable, a large ice cap was unstable, and a snowball state was again stable. 

\begin{remark} In the case that $\eta_c=0,1$, one of the physical boundaries is entirely composed of equilibria, and stability is determined by the direction of the vector field near it.  Physically, this degenerate scenario comes about when there is an absence of volcanism in the ice-covered state or a perfect balance between weathering and volcanism in the ice-free state. We do not discuss these cases further.\end{remark}

\begin{figure}[ht]
\centering
\includegraphics[width=0.47\textwidth]{{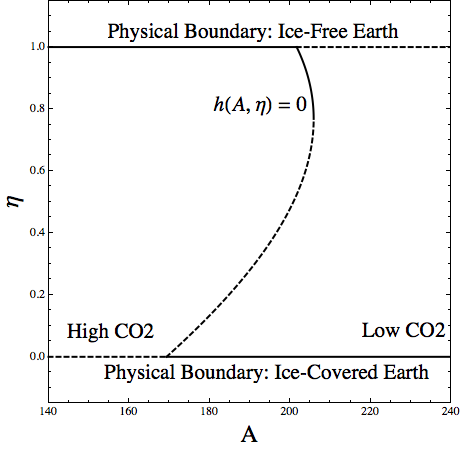}} 
\caption{The physical region of the phase space and possible fixed points of the system given by the $\eta$-nullcline, $h=0$.  The location of the equilibrium is determined by the critical effective area of exposed land $0<\eta_c<1$. Solid black portions of the curve represent stable equilibria while the dashed lines denote unstable equilibria.  Solid black portions of the boundary are attractive sliding regions and dashed boundaries are crossing regions.}\label{critman}
\end{figure} 

\subsubsection{Sliding and escape from Snowball Earth}

Recall that $\eta_c$ was defined to be a ratio of volcanism to weathering.  We now prove that the system always recovers from the ice-covered regime provided this parameter is positive, i.e. there is some contribution from volcanic outgassing to atmospheric greenhouse gas content. {The following result can be easily achieved following the proof of Theorem \ref{thm:projection}:}

\begin{corollary}\label{exit} Suppose $\eta_c>0$.  Then any orbit of the system \eqref{AetaPW} that enters the line $\eta=0$ must exit in finite time.\end{corollary}

\begin{proof}
{Since $\frac{dA}{dt}\Big|_{\eta=0}<0$, any orbit that reaches the line $\eta=0$ must eventually reach the unique point $(A_*,0)$ such that $h(A_*,0)=0$. Since $h$ is negative to the right of this point and positive to the left, there is a single orbit that reaches the tangency point from the interval $0<\eta<1$, passes through it from right to left, and then reenters the open interval. Due to the forward-time uniqueness guaranteed by Theorem \ref{thm:projection}, the sliding solution must follow this orbit and reenter the physical region.}
\end{proof}
 
\begin{remark} Similarly, any orbit that enters an ice-free state must exit in finite time.  The required condition is that there is \textit{not} a perfect balance between volcanism and weathering, i.e. $\eta_c\neq 1$.\end{remark}

\subsubsection{Numerical simulations\label{po}}

As mentioned above, the Filippov framework  allows for more robust numerical simulation of the model.  Let $(A_c,\eta_c)$ be the location of the single fixed point.  We find that small ice cap states corresponding to fixed points with $\eta_f<\eta_c<1$ are possible attractors of the system, as are periodic orbits born from the Hopf bifurcation at $\eta_c=\eta_f$.  

The fold of the $\eta$-nullcline is a \textit{canard point}, and the mechanism that produces the large amplitude periodic orbit in Figure \ref{dynamics} is a \textit{canard explosion} \cite{MR643399,MR653888,MR653889,MR653890, dumortier1996canard, krupa2001extending}. While mathematically interesting, canard orbits are unlikely observables for planar systems because they occur within a parameter range \added{that is exponentially small in $\delta$.}  However, they can be robust in higher dimensions which makes more complex oscillatory behavior possible, see e.g.  \cite{benoit1983systemes,desroches2012mixed,szmolyan2001canards,wechselberger2005existence}.  With this in mind, we remark that an interesting avenue for future research involves studying an extended version of the system \eqref{ig-pws}-\eqref{ig-pws1} with an additional variable ($w$ in \cite{mcgwid2014simplification}) and foregoing their invariant manifold reduction.

\added{In the simulations, trajectories that leave the upper boundary and re-enter the physical region are rapidly attracted to a slow manifold that is within $\mathcal{O}(\delta)$ of the $\eta$-nullcline (the so-called \textit{critical manifold}) as guaranteed by Fenichel theory \cite{fenichel1979geometric, jones1995geometric}.  This is why the trajectories in the figures appear to have sharp corners at the exit from Snowball Earth, when in fact they are differentiable at the exit point and tangent to the boundary.}  

 \begin{figure}[ht]
  \centering
   \subfloat[Small ice cap equilibrium]  {\includegraphics[width=0.4\textwidth]{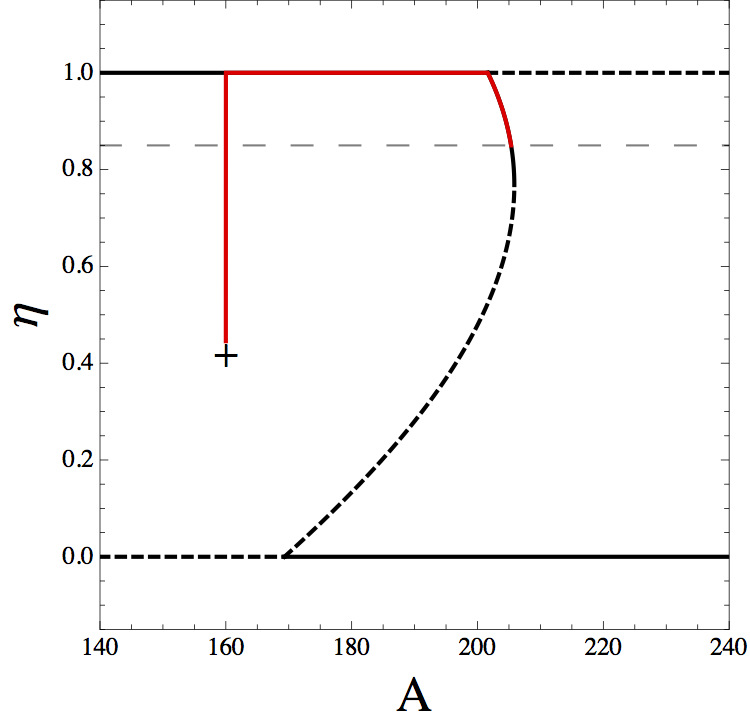}}  \quad \quad
   \subfloat [Periodic orbit]  {\includegraphics[width=0.4\textwidth]{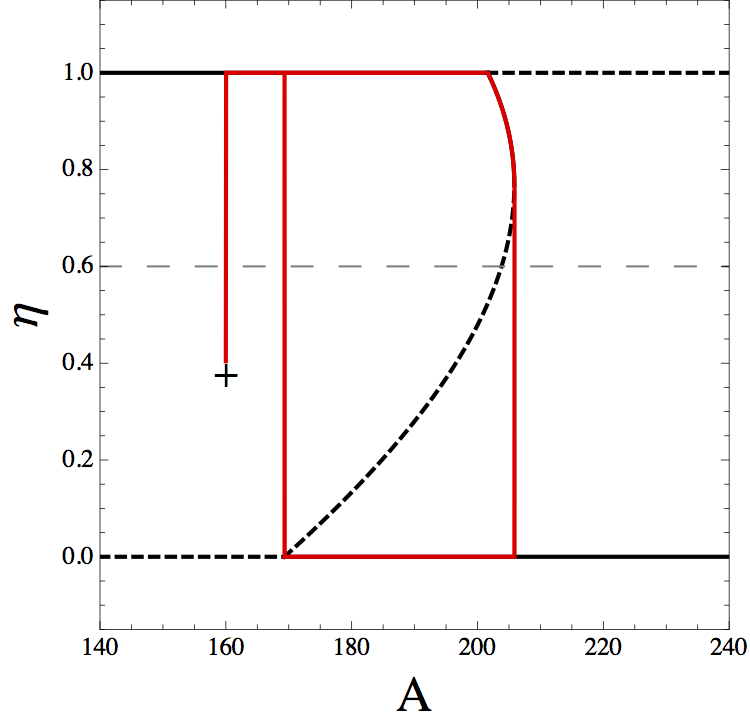}} \quad
  \caption{Attractors of the system when (a) $\eta_c=0.85$ and (b) $\eta_c=0.6$. The $+$ symbol marks the initial condition and the horizontal long-dashed line is the $A$-nullcline. In (a), the orbit reaches the ice-free state and slides until it reaches the intersection of the folded curve $h(A,\eta)=0$ with this boundary.  It then enters the physical region and approaches the small ice cap equilibrium.  In (b), the fixed point is unstable and the orbit oscillates between the ice-free and ice-covered boundaries.  \added{Parameters are as in Table \ref{tab:ParValues} and $\delta=0.01$.} Simulations were performed using Mathematica 9.}  
\label{dynamics}
 \end{figure}

\subsection{Application to the Jormungand Model\label{sect:jorm}}

The previous occurrence of a complete snowball event is still a highly contested topic in geology.  In the model \eqref{ig-pws}-\eqref{ig-pws1}, the only possible stable fixed point is a small ice cap. In this section, we present a variant of the system, as introduced by Abbot, Voigt, and Koll \cite{abbot2011}, and employ the {nonsmooth} framework as above.  By modifying the albedo function $\alpha(\eta,y)$ so that it differentiates between the albedo of bare ice and snow-covered ice, the new system has an additional stable state that corresponds to a large ice cap. In \cite{abbot2011}, this was called the \textit{Jormungand} state because it allows for a snake-like band of open ocean at the equator.  Moreover, there are again attracting periodic orbits.  These can be seen in Figure \ref{jorm}.

In this version of the model, the albedo is defined as follows:

$$\alpha_J(\eta,y)=\left\{\begin{array}{ll}\alpha_w,& y<\eta\\
\frac{1}{2}(\alpha_w+\alpha_2(\eta)), & y=\eta\\ \alpha_2(y), & y>\eta, \end{array}\right.\eqno(14)
$$

where $\alpha_2(y)=\frac{1}{2}(\alpha_s+\alpha_i)+\frac{1}{2}(\alpha_s-\alpha_i)\tanh M(y-0.35) $.

Here $\alpha_w$ is the albedo of open water, $\alpha_i$ is the albedo of  {\em bare} sea ice, and $\alpha_s$ is the albedo of {\em snow-covered} ice. The model assumes sea ice aquires a snow cover only for latitudes above $y=0.35$.  We modify $h(A,\eta)$ by replacing $\alpha$ with $\alpha_J$:

\begin{equation}\label{hj}
h_J(A,\eta)=\frac{Q}{B+C} \left(s(\eta)(1-\alpha_J(\eta,\eta))+\frac{C}{B}(1-\overline{\alpha_J}(\eta))\right)-\frac{A}{B}-T_c 
\end{equation}
where we note that although $\alpha_J(\eta,y)$ has a discontinuity across $y=\eta$, both $\alpha_J(\eta,\eta)$ and $\overline{\alpha_J}(\eta)=\int_0^1 \alpha(\eta,y)s(y)dy$ are smooth functions of $\eta$.

For this system, we can immediately apply Theorem \ref{thm:projection} and {Corollary} \ref{exit} to obtain the following result.

\begin{corollary} The physical region is forward invariant with respect to the system \eqref{AetaPW} where $h$ is replaced by $h_J$. Solutions exist and are unique in forward time, and any trajectory that enters an ice-covered state must exit in finite time provided $\eta_c\neq 0$.\end{corollary}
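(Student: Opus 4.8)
The plan is to treat this as a direct corollary of Theorems~\ref{filippov-bud} and~\ref{exit}: the Jormungand system is obtained from \eqref{ig-pws}-\eqref{ig-pws1} purely by the replacement $h\mapsto h_J$, so it suffices to check that $h_J$ carries the regularity and the low-$\eta$ geometry that those two theorems require, after which the earlier arguments apply essentially verbatim.

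First I would verify that the planar vector field $(g,h_J)$ is $C^1$ on $\mathbb R^2$. The function $g(A,\eta)=\delta(\eta-\eta_c)$ is affine, hence $C^1$; for $h_J$ one invokes the observation recorded just above its definition, namely that even though $\alpha_J(\eta,y)$ jumps across $y=\eta$, the on-ice-line value $\alpha_J(\eta,\eta)$ and the weighted average $\overline{\alpha_J}(\eta)=\int_0^1\alpha_J(\eta,y)s(y)\,dy$ are both smooth in $\eta$ (the $\tanh$-profile $\al_2$ is smooth, and integrating a smooth integrand against the polynomial $s$ preserves smoothness), while $s(\eta)$ is polynomial. Since the $A$-dependence of $h_J$ is the single affine term $-A/B$, we get $h_J\in C^1(\mathbb R^2)$. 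Applying Theorem~\ref{filippov-bud} with $G:=g$ and $H:=h_J$ then immediately yields a unique forward-time Filippov solution for each initial condition together with forward invariance of the strip $\mathbb R\times[0,1]$, which is exactly the physical region.

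It remains to recover the escape statement of Theorem~\ref{exit} with $h_J$ in place of $h$. Here I would point out that the argument there is local near the boundary $\eta=0$ and uses only two features of $h$, both shared by $h_J$. (i) $h_J$ is affine and strictly decreasing in $A$, so the nullcline $h_J=0$ meets $\{\eta=0\}$ in a single point, call its $A$-coordinate $A_*$, with $h_J(A,0)\le 0\iff A\ge A_*$; hence an orbit can reach $\{\eta=0\}$ only with $A\ge A_*$, and the confinement field $|h_J|$ for $\eta<0$ is nonnegative there. (ii) Since $\eta_c=V/W\ge 0$ and $\eta_c\neq 0$, in fact $\eta_c>0$, so the Filippov sliding field on $\eta=0$ is $(\dot A,\dot\eta)=(-\delta\eta_c,0)$ with $\dot A<0$; thus $A$ decreases monotonically and reaches $A_*$ in finite time, $\{\eta=0\}$ is a crossing region for $A<A_*$ with $\dot\eta=h_J>0$ on both sides by continuity, and forward uniqueness from Theorem~\ref{filippov-bud} forces the sliding orbit to continue along the unique smooth orbit of $(\dot A,\dot\eta)=(g,h_J)$ through the tangency point $(A_*,0)$ back into $\mathbb R\times(0,1]$. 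The symmetric statement about exit from the ice-free boundary $\eta=1$ under $\eta_c\neq 1$ follows the same way from the symmetric version of Theorem~\ref{exit}.

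The one place that calls for a word of care --- the nearest thing to an obstacle --- is that the Jormungand nullcline $h_J=0$ has a more complicated, S-shaped graph than the baseline cubic (it is the extra large--ice--cap branch that makes the Jormungand state possible), so one must be sure the proof of Theorem~\ref{exit} does not secretly rely on the baseline nullcline having a single fold at $\eta_f$. I would emphasize that it does not: the number and location of folds of $h_J$ in the interior of the strip are irrelevant to the escape argument, which needs only affineness in $A$ near $\eta=0$ (giving the unique intersection $A_*$ and the monotone sign change) and $C^1$-regularity together with forward uniqueness (giving re-entry along the tangency orbit). Hence the two theorems transfer without modification and the corollary follows.
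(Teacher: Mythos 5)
Your proposal is correct and follows the same route as the paper, which simply invokes Theorems~\ref{filippov-bud} and~\ref{exit} directly after noting (just above \eqref{hj}) that $\alpha_J(\eta,\eta)$ and $\overline{\alpha_J}(\eta)$ are smooth, so that $h_J$ is $C^1$. Your additional check that the escape argument uses only the affine, strictly decreasing $A$-dependence of $h_J$ near $\eta=0$ (and not the single-fold structure of the baseline nullcline) is exactly the right point to verify and is left implicit in the paper.
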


 \begin{figure}[ht]
  \centering
   \subfloat[]  {\includegraphics[width=0.4\textwidth]{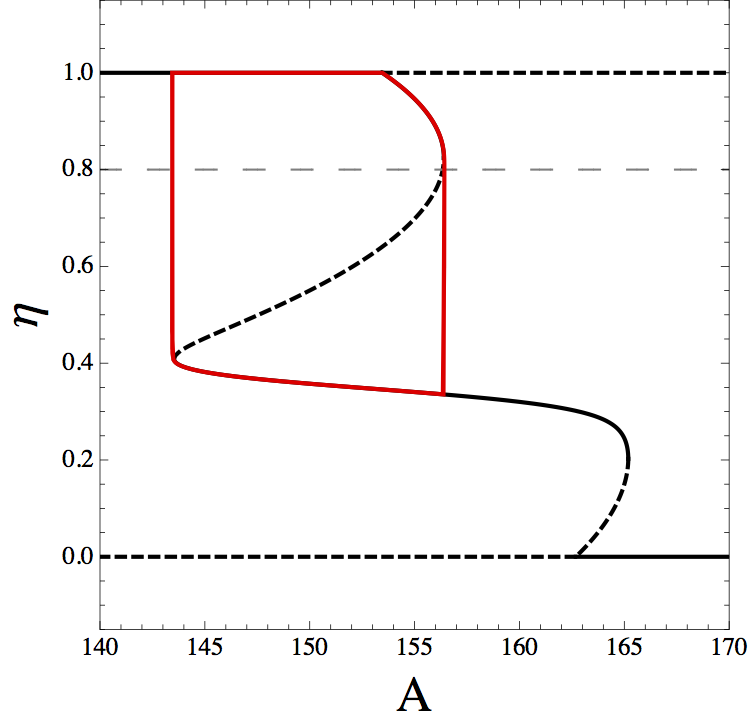}}  \quad \quad
   \subfloat []  {\includegraphics[width=0.4\textwidth]{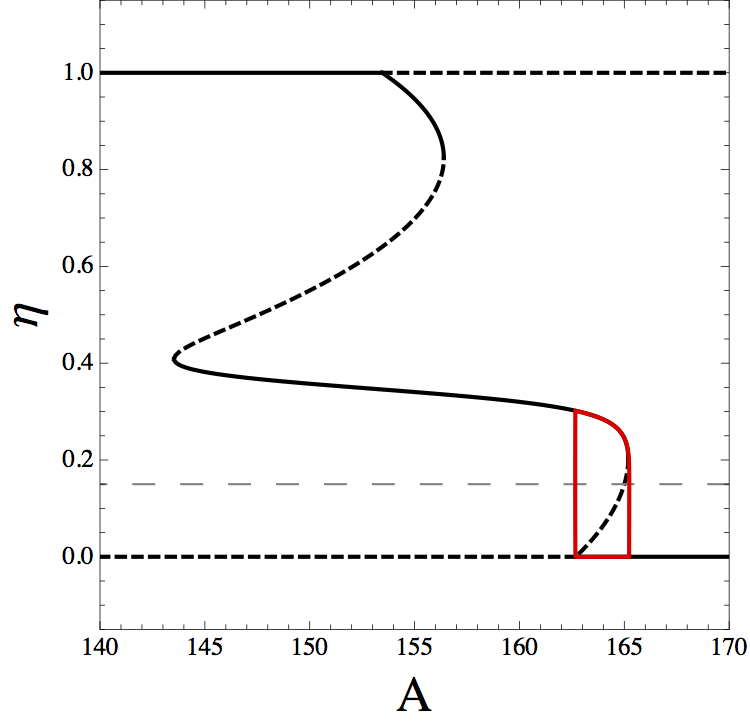}} \quad
  \caption{Periodic orbits of the Jormungand system when (a) $\eta_c=0.8$ and (b) $\eta_c=0.15$. The folded curve is the $\eta$-nullcline $h_J(A,\eta)=0$ and dashing is as in Figures \ref{critman} and \ref{dynamics}.  \added{Parameters are as in Table \ref{tab:ParJorm} and $\delta=0.01$.}}  
\label{jorm}
 \end{figure}

\begin{table}[htb]  
\centering
\begin{tabular}{|ccc||c|}  
\hline
\textbf{Parameters} & Value & Units \\ \hline \hline
&& \\
$T_c$ & 0 & ${}^\circ\text{C}$  \\
$M$ & 25 & dimensionless  \\
$\alpha_w$ & $0.35$ & dimensionless   \\
$\alpha_i$ & $0.45$ & dimensionless   \\
$\alpha_s$ & $0.8$ & dimensionless   \\
&& \\
\hline
\end{tabular}  
\caption{Parameter values as in Table \ref{tab:ParValues} unless specified above.  Additional values taken from \cite{abbot2011}.}   \label{tab:ParJorm}
\end{table}

\section{Discussion\label{sect:discussion}}

In this article we have extended a class of energy balance models to include a greenhouse gas component.  {The resulting system was nonsmooth due to physical constraints on the extent of Earth's glaciation and did not immediately fit into an existing analytical framework.  However, we defined two frameworks that produced the (same) expected motion, in agreement with physical arguments from the climate literature.  Both approaches prevented the dynamics from exiting the physically relevant region; the first was based on a rule that projected the vector field onto the boundary when it was reached and the second defined a ``confinement vector field'' that pointed into the region.  In both cases, we showed the existence and uniqueness of forward-time solutions.  We further proved that the system always escapes from the ice-covered scenario, in agreement with Kirschvink's hypothesis about carbon dioxide accumulation in a snowball scenario due to a shut down of chemical weathering processes \cite{kirschvink1992protero}.  Using our model, we found that small ice-cap and large amplitude ice-cover oscillations were possible attractors of the system. We then applied our results to the case of a ``Jormungand'' world and showed that it is possible to obtain further oscillatory dynamics between no ice cover, large ice-cover, and full ice-cover states. }

General mathematical questions brought about by this work have to do with building a general framework for such models.  A first step toward dealing with physical boundaries might be a general theory for semiflows generated by smooth vector fields on manifolds with boundary.  In addition, the stable portions of the physical boundaries in our model should be thought of as equilibria of the fast subsystem, i.e. as part of the \textit{critical manifold} from geometric singular perturbation theory \cite{jones1995geometric}.  In fact, they are much more \textit{normally hyperbolic} than the curve $h=0$; they are reachable in finite time!  However, the current theory cannot be directly applied to the full discontinuous system and we remark that developing an analogous Fenichel theory \cite{fenichel1979geometric} for singularly perturbed discontinuous systems with sliding is an interesting avenue for future research.

Another interesting future direction is to study the explicit effect of the temperature (here we have considered it only through the equation for $\eta$ based on the reduction by McGehee and Widiasih \cite{mcgwid2014simplification}) on the dynamics of the system.  In a nonsmooth system, the addition of a stable dimension may destroy an existing stable periodic orbit \cite{sieber2010small}.  Moreover, an additional dimension could result in more interesting glacial dynamics, such as mixed mode oscillations.
 
Finally, it is natural to ask how the shape of the $\eta$-nullcline changes with physical parameters and how this affects system dynamics, e.g. could the lower fold in Figure \eqref{jorm} move to the left of the upper fold?  We refer the reader to \cite{widwal2014dynamics} for a number of examples.  \\

\section*{Acknowledgements:} This research was supported in part by the Mathematics and Climate Research Network and NSF grants DMS-0940366, DMS-0940363.  AB was also supported in part by the Institute for Mathematics and its Applications with funds provided by the National Science Foundation.  We thank the members of the MCRN Paleoclimate and Nonsmooth Systems seminar groups for many useful discussions, especially Mary Lou Zeeman and Emma Cutler.  We are also grateful to Mike Jeffrey, Rachel Kuske, Andrew Roberts, and the anonymous referee for their insights and suggestions.

\bibliographystyle{plain}
      \bibliography{ghg.bib}

\begin{thebibliography}{10}

\bibitem{abbot2011}
D.~Abbot, A.~Voigt, and D.~Koll.
\newblock The {J}ormungand global climate state and implications for
  {N}eoproterozoic glaciations.
\newblock {\em J Geophys. Res., 116}, 2011.

\bibitem{MR653888}
{\'E}.~Beno{\^\i}t.
\newblock Chasse au canard. {II}. {T}unnels---entonnoirs---peignes.
\newblock {\em Collect. Math.}, 32(2):77--97, 1981.

\bibitem{benoit1983systemes}
{\'E}~Beno{\^\i}t.
\newblock Syst\`emes lents-rapides dans $\mathbb{R}^3$\ et leurs canards.
\newblock In {\em Third {S}chnepfenried geometry conference, {V}ol. 2
  ({S}chnepfenried, 1982)}, volume 109 of {\em Ast\'erisque}, pages 159--191.
  Soc. Math. France, Paris, 1983.

\bibitem{MR653890}
{\'E}.~Beno{\^\i}t and J.-L. Callot.
\newblock Chasse au canard. {IV}. {A}nnexe num\'erique.
\newblock {\em Collect. Math.}, 32(2):115--119, 1981.

\bibitem{budyko2010effect}
M.~I. Budyko.
\newblock The effect of solar radiation variations on the climate of the earth.
\newblock {\em Tellus}, 21(5):611--619, 1969.

\bibitem{cahalan1979stability}
R.~F. Cahalan and G.~R. North.
\newblock A stability theorem for energy-balance climate models.
\newblock {\em Journal of the Atmospheric Sciences}, 36(7):1178--1188, 1979.

\bibitem{caldeira1992susceptibility}
K.~Caldeira and J.~F. Kasting.
\newblock Susceptibility of the early earth to irreversible glaciation caused
  by carbon dioxide clouds.
\newblock {\em Nature}, 359(6392):226--228, 1992.

\bibitem{MR653889}
J.-L. Callot.
\newblock Chasse au canard. {III}. {L}es canards ont la vie br\`eve.
\newblock {\em Collect. Math.}, 32(2):99--114, 1981.

\bibitem{caratheodory2013vorlesungen}
C.~Carath{\'e}odory.
\newblock {\em Vorlesungen {\"u}ber reelle Funktionen}.
\newblock Leipzig, 1927.

\bibitem{desroches2012mixed}
M.~Desroches, J.~Guckenheimer, B.~Krauskopf, C.~Kuehn, H.~Osinga, and
  M.~Wechselberger.
\newblock Mixed-mode oscillations with multiple time scales.
\newblock {\em SIAM Review}, 54(2):211--288, 2012.

\bibitem{MR643399}
F.~Diener and M.~Diener.
\newblock Chasse au canard. {I}. {L}es canards.
\newblock {\em Collect. Math.}, 32(1):37--74, 1981.

\bibitem{dumortier1996canard}
F.~Dumortier and R.~H. Roussarie.
\newblock {\em Canard cycles and center manifolds}, volume 577.
\newblock American Mathematical Soc., 1996.

\bibitem{edmond1996fluvial}
J.~M. Edmond, M.~R. Palmer, E.~T. Brown, and Y.~Huh.
\newblock Fluvial geochemistry of the eastern slope of the northeastern andes
  and its foredeep in the drainage of the orinoco in colombia and venezuela.
\newblock {\em Geochimica et cosmochimica acta}, 60(16):2949--2974, 1996.

\bibitem{fenichel1979geometric}
N.~Fenichel.
\newblock Geometric singular perturbation theory for ordinary differential
  equations.
\newblock {\em Journal of Differential Equations}, 31(1):53--98, 1979.

\bibitem{filippov1964}
A.~F. Filippov.
\newblock Differential equations with discontinuous right-hand side.
\newblock {\em American Mathematical Society Translations}, 2:199--231, 1964.

\bibitem{filippov1988}
A.~F. Filippov.
\newblock {\em Differential Equations with Discontinuous Righthand Sides}.
\newblock Kluwer Academic Publ. Dortrecht, 1988.

\bibitem{graves1993new}
C.~E. Graves, W.-H. Lee, and G.~R. North.
\newblock New parameterizations and sensitivities for simple climate models.
\newblock {\em Journal of geophysical research}, 98(D3):5025--5036, 1993.

\bibitem{hoffmanschrag2002terranova}
P.~Hoffman and D.~Schrag.
\newblock The snowball earth hypothesis: testing the limits of global change.
\newblock {\em Terra Nova, 14}, page 129â€“155, 2002.

\bibitem{hogg}
A.~M. Hogg.
\newblock Glacial cycles and carbon dioxide: A conceptual model.
\newblock {\em Geophysical Research Letters}, 35(1):L01701, 2008.

\bibitem{jeffrey2014hidden}
M.~R. Jeffrey.
\newblock Hidden dynamics in models of discontinuity and switching.
\newblock {\em Physica D: Nonlinear Phenomena}, 273:34--45, 2014.

\bibitem{jones1995geometric}
C.~K. R.~T. Jones.
\newblock Geometric singular perturbation theory.
\newblock {\em Dynamical systems}, pages 44--118, 1995.

\bibitem{kirschvink1992protero}
J.~Kirschvink.
\newblock Late proterozoic low-latitude global glaciation: the snowball earth.
\newblock {\em The Proterozoic Biosphere: A Multidisciplinary Study}, pages
  51--52, 1992.

\bibitem{krupa2001extending}
M.~Krupa and P.~Szmolyan.
\newblock Extending geometric singular perturbation theory to nonhyperbolic
  points---fold and canard points in two dimensions.
\newblock {\em SIAM journal on mathematical analysis}, 33(2):286--314, 2001.

\bibitem{kump2000chemical}
L.~R. Kump, S.~L. Brantley, and M.~A. Arthur.
\newblock Chemical weathering, atmospheric co2, and climate.
\newblock {\em Annual Review of Earth and Planetary Sciences}, 28(1):611--667,
  2000.

\bibitem{mcglehman2012}
R.~McGehee and C.~Lehman.
\newblock A paleoclimate model of ice-albedo feedback forced by variations in
  earth's orbit.
\newblock {\em SIAM Journal on Applied Dynamical Systems}, 11(2):684--707,
  2012.

\bibitem{mcgwid2014simplification}
R.~McGehee and E.~Widiasih.
\newblock A quadratic approximation to {B}udyko's ice albedo feedback model
  with ice line dynamics.
\newblock {\em SIADS, Vol. 13, Issue 1, DOI: 10.1137/120871286}, 2014.

\bibitem{north1975theory}
G.~R. North.
\newblock Theory of energy-balance climate models.
\newblock {\em J. Atmos. Sci}, 32(11):2033--2043, 1975.

\bibitem{pierrehumbert2011climate}
R.~T. Pierrehumbert, D.~S. Abbot, A.~Voigt, and D.~Koll.
\newblock Climate of the {N}eoproterozoic.
\newblock {\em Annual Review of Earth and Planetary Sciences}, 39:417--460,
  2011.

\bibitem{pollard2005snowball}
D.~Pollard and J.~F. Kasting.
\newblock Snowball {E}arth: A thin-ice solution with flowing sea glaciers.
\newblock {\em Journal of Geophysical Research: Oceans (1978--2012)}, 110(C7),
  2005.

\bibitem{sellersglobal}
W.~D. Sellers.
\newblock A global climatic model based on the energy balance of the
  earth-atmosphere system.
\newblock {\em Journal of Applied Meteorology}, 8(3):392--400, 1969.

\bibitem{sieber2010small}
J.~Sieber and P.~Kowalczyk.
\newblock Small-scale instabilities in dynamical systems with sliding.
\newblock {\em Physica D: Nonlinear Phenomena}, 239(1):44--57, 2010.

\bibitem{szmolyan2001canards}
P.~Szmolyan and M.~Wechselberger.
\newblock Canards in $\mathbb{R}^3$.
\newblock {\em Journal of Differential Equations}, 177(2):419--453, 2001.

\bibitem{tung2007topics}
K.~K. Tung.
\newblock {\em Topics in mathematical modelling}.
\newblock Princeton University Press, 2007.

\bibitem{widwal2014dynamics}
J.~Walsh and E.~Widiasih.
\newblock A dynamics approach to a low-order climate model.
\newblock {\em Discrete \& Continuous Dynamical Systems-Series B}, 19(1), 2014.

\bibitem{wechselberger2005existence}
M.~Wechselberger.
\newblock Existence and bifurcation of canards in $\mathbb{R}^3$ in the case of
  a folded node.
\newblock {\em SIAM Journal on Applied Dynamical Systems}, 4(1):101--139, 2005.

\bibitem{widiasih2013dynamics}
E.~R. Widiasih.
\newblock Dynamics of the {B}udyko energy balance model.
\newblock {\em SIAM Journal on Applied Dynamical Systems}, 12(4):2068--2092,
  2013.

\end{thebibliography}
\end{document}